\DeclareMathOperator{\Con}{Con}
\DeclareMathOperator{\Max}{Max}
\newtheorem{theorem}{Theorem}[section]
\newtheorem{definition}[theorem]{Definition}
\newtheorem{lemma}[theorem]{Lemma}
\newtheorem{proposition}[theorem]{Proposition}
\newtheorem{remark}[theorem]{Remark}
\newtheorem{example}[theorem]{Example}
\title{The logic with unsharp implication and negation}
\author{Ivan~Chajda and Helmut~L\"anger}
\date{}
\begin{document}

%\footnotetext{Support of the research of the authors by the Austrian Science Fund (FWF), project I~4579-N, and the Czech Science Foundation %(GA\v CR), project 20-09869L, entitled ``The many facets of orthomodularity'', is gratefully acknowledged.}

\maketitle

\begin{abstract}
It is well-known that intuitionistic logics can be formalized by means of Brouwerian semilattices, i.e.\ relatively pseudocomplemented semilattices. Then the logical connective implication is considered to be the relative pseudocomplement and conjunction is the semilattice operation meet. If the Brouwerian semilattice has a bottom element $0$ then the relative pseudocomplement with respect to $0$ is called the pseudocomplement and it is considered as the connective negation in this logic. Our idea is to consider an arbitrary meet-semilattice with $0$ satisfying only the Ascending Chain Condition, which is trivially satisfied in finite semilattices, and introduce the connective negation $x^0$ as the set of all maximal elements $z$ satisfying $x\wedge z=0$ and the connective implication $x\rightarrow y$ as the set of all maximal elements $z$ satisfying $x\wedge z\leq y$. Such a negation and implication is ``unsharp'' since it assigns to one entry $x$ or to two entries $x$ and $y$ belonging to the semilattice, respectively, a subset instead of an element of the semilattice. Surprisingly, this kind of negation and implication, respectively, still shares a number of  properties of these connectives in intuitionistic logic, in particular the derivation rule Modus Ponens. Moreover, unsharp negation and unsharp implication can be characterized by means of five, respectively seven simple axioms. Several examples are presented. The concepts of a deductive system and of a filter are introduced as well as the congruence determined by such a filter. We finally describe certain relationships between these concepts. 
\end{abstract}

{\bf AMS Subject Classification:} 03G10, 03G25, 03B60, 06A12, 06D20

{\bf Keywords:} Semilattice, Brouwerian semilattice, Heyting algebra, intuitionistic logic, unsharp negation, unsharp implication, deductive system, filter, congruence

\section{Introduction}

Intuitionistic logic is usually algebraically formalized by means of {\em Brouwerian semilattices}, i.e.\ semilattices $(S,\wedge,*)$ where $*$ denotes relative pseudocomplementation which is considered as the connective implication, see \cite{B08}, \cite{B13}, \cite{Fr}, \cite K and \cite{M55}. If $(S,\wedge,*)$ has a $0$ then $x*0$ is the pseudocomplement of $x$ usually denoted by $x^*$ and considered as negation of $x$ in this logic. If $(S,\wedge,*,0)$ is even a lattice then it is called a {\em Heyting algebra}, see \cite H and \cite{M70}. For posets the concept of pseudocomplementation was extended and studied by the authors in \cite{C12} and \cite{CL21}.

It is well-known that every Brouwerian lattice (or Heyting algebra) is distributive.The concept of relative pseudocomplementation was extended by the first author to non-distributive lattices under the name sectional pseudocomplementation, see \cite{C03} and \cite{CLP}. Hence a kind of non-distributive intuitionistic logic can be created on sectionally pseudocomplemented lattices.

In their previous papers \cite{CL22a} and \cite{CLa} the authors showed that some important logics can be based also on posets that need not be lattices. An example of such a logic is the logic of quantum mechanics based on orthomodular posets, see e.g.\ \cite{CL22a}, \cite{FLP}, \cite{Fi} and \cite{PP}. It is evident that in this case some logical connectives such that disjunction or conjunction may be only partial operations or, as pointed out by the authors in \cite{CLa} and \cite{CL22b}, they may be be considered in an ``unsharp version'', i.e.\ their result need not be a single element but may be a subset of the poset in question. Thus also the connective implication is created in this way as ``unsharp''. For ``unsharpness'' see also \cite{GG}. This motivated us to study a variant of intuitionistic logic based on lattices that need neither be relatively pseudocomplemented nor even sectionally pseudocomplemented where the connective implication is unsharp.

\section{Preliminaries}

In the following we identify singletons with their unique element, i.e.\ we will write $x$ instead of $\{x\}$. Moreover, all posets considered in the sequel are assumed to satisfy the Ascending Chain Condition which we will abbreviate by ACC. This implies that every element lies under a maximal one. Of course, every finite poset satisfies the ACC. Let $(P,\leq)$ be a poset, $b\in P$ and $A,B\subseteq P$. By $\Max A$ we will denote the set of all maximal elements of $A$. We define
\begin{align*}
    A\leq B & \text{ if }a\leq b\text{ for all }a\in A\text{ and all }b\in B, \\
   A\leq_1B & \text{ if for every }a\in A\text{ there exists some }b\in B\text{ with }a\leq b, \\
A\approx_1B & \text{ if }A\leq_1B\text{ and }B\leq_1A.
\end{align*}
The relation $\leq_1$ is a quasiorder relation on $2^P$ and $\approx_1$ an equivalence relation on $2^P$. It is easy to see that $A\leq_1\Max B$ provided $A\subseteq B$ and that $A\leq_1b$ is equivalent to $A\leq b$.

Let $\mathbf S=(S,\wedge)$ be an arbitrary meet-semilattice and $A,B\subseteq S$. We define
\[
A\wedge B:=\{a\wedge b\mid a\in A,b\in B\}.
\]

\section{Unsharp negation}

Let $(S,\wedge,0)$ be a meet-semilattice with $0$ satisfying the ACC, $a\in S$ and $A\subseteq S$. We define
\[
a^0:=\Max\{x\in S|a\wedge x=0\}.
\]
Hence $^0$ is a unary operator on the meet-semilattice $(S,\wedge,0)$ with $0$ satisfying the ACC which assigns to every element $x\in S$ the non-void subset $x^0\subseteq S$. The element $a$ is called {\em sharp} if $a^{00}=a$. Moreover, we define
\[
A^0:=\Max\{x\in S|A\wedge x=0\}.
\]

We are going to prove the following properties of the operator $^0$ for every meet-semilattice with $0$ satisfying the ACC.

\begin{theorem}\label{th1}
Let $\mathbf S=(S,\wedge,0)$ be a meet-semilattice with $0$ satisfying the {\rm ACC} and $a,b\in S$. Then the following holds:
\begin{enumerate}[{\rm(i)}]
\item $a^0$ is an antichain,
\item $a\leq_1a^{00}$,
\item $a\leq b$ implies $b^0\leq_1a^0$,
\item $0^0=\Max S$,
\item $a\wedge a^0=0$,
\item if $\mathbf S$ is bounded then $0^0=1$ and $1^0=0$,
\item $a\wedge0^0\approx_1a$,
\item $a\wedge(a\wedge b)^0\approx_1a\wedge b^0$.
\end{enumerate}
\end{theorem}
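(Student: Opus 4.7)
The plan is to prove the eight statements in the order given, since each follows directly from the definition of ${}^0$ together with the ACC and the remarks about $\le_1$ and $\approx_1$ in the preliminaries. The recurring technique is: to show $a \leq_1 M$ for some set $M$ defined as $\Max\{x \mid P(x)\}$, first verify that $a$ satisfies the property $P$, then invoke the ACC to lift $a$ to a maximal element of the set $\{x \mid P(x)\}$.

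Items (i) and (iv) are immediate from the definition: $a^0$ is by construction a set of maximal elements, hence an antichain, and $0\wedge x = 0$ for every $x\in S$ gives (iv). For (v), every $x\in a^0$ satisfies $a\wedge x=0$ by definition, so the set $a\wedge a^0$ equals $\{0\}$, which by the singleton convention is $0$. Part (vi) is then just the observation that in a bounded semilattice $\Max S=\{1\}$, combined with $1\wedge x=x$.

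The more interesting parts are (ii), (iii), (vii), and (viii). For (ii), I would first note that by (v), $a\wedge x = 0$ for every $x\in a^0$, which can be rewritten as $a^0\wedge a = 0$; hence $a$ is a member of the set whose maximal elements form $a^{00}$, and the ACC lifts $a$ to some $z\in a^{00}$ with $a\leq z$. For (iii), suppose $a\leq b$ and pick any $z\in b^0$, so $b\wedge z = 0$; then $a\wedge z\leq b\wedge z=0$, so $z$ belongs to $\{x\mid a\wedge x=0\}$, and ACC gives a maximal element $w$ of this set above $z$, i.e.\ $w\in a^0$ with $z\leq w$. For (vii), the inequality $a\wedge 0^0\leq a$ is clear, giving $a\wedge 0^0\leq_1 a$; conversely, by ACC there is $m\in\Max S=0^0$ with $a\leq m$, and then $a=a\wedge m\in a\wedge 0^0$, giving $a\leq_1 a\wedge 0^0$.

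Part (viii) requires both directions. For $a\wedge(a\wedge b)^0\leq_1 a\wedge b^0$: take a typical element $a\wedge x$ with $x\in(a\wedge b)^0$, so $a\wedge b\wedge x = 0$; then $b\wedge(a\wedge x)=0$, so by ACC there exists $w\in b^0$ with $a\wedge x\le w$, and then $a\wedge x\le a\wedge w\in a\wedge b^0$. For the reverse $a\wedge b^0\leq_1 a\wedge(a\wedge b)^0$: take $a\wedge x$ with $x\in b^0$, so $b\wedge x=0$, hence $a\wedge b\wedge x=0$, and ACC furnishes $w\in(a\wedge b)^0$ with $x\leq w$; then $a\wedge x\leq a\wedge w\in a\wedge(a\wedge b)^0$. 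I do not expect any genuine obstacle: the only point that needs slight care is correctly distinguishing membership from the $\le_1$ relation and consistently applying ACC on the right set, which is what makes (viii) the most subtle step.
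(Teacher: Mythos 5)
Your proof is correct and follows essentially the same route as the paper's: the easy items from the definition, and for (ii), (iii), (vii), (viii) the same ``verify membership in the defining set, then lift to a maximal element via the ACC'' technique (your second half of (viii) simply inlines the paper's appeal to item (iii)). No gaps.
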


\begin{proof}
\
\begin{enumerate}
\item[(i)] This is clear.
\item[(ii)] We have $a\in\{x\in S\mid a^0\wedge x=0\}$.
\item[(iii)] If $a\leq b$ then $\{x\in S\mid b\wedge x=0\}\subseteq\{x\in S\mid a\wedge x=0\}$.
\item[(iv)] and (v) follow directly from the definition of $a^0$.
\item[(vi)] If $\mathbf S$ is bounded then according to (iv)
\begin{align*}
0^0 & =\Max S=1, \\
1^0 & =\Max\{x\in S\mid1\wedge x=0\}=\Max\{0\}=0.
\end{align*}
\item[(vii)] According to (iv) we have $a\leq_1\Max S=0^0$ and hence $a\leq_1a\wedge0^0\leq a$.
\item[(viii)] Everyone of the following statements implies the next one:
\begin{align*}
       (a\wedge b)\wedge(a\wedge b)^0 & =0, \\
b\wedge\big(a\wedge(a\wedge b)^0\big) & =0, \\
                 a\wedge(a\wedge b)^0 & \leq_1b^0, \\
                 a\wedge(a\wedge b)^0 & \leq_1a\wedge b^0.
\end{align*}
From $a\wedge b\leq b$ we conclude $b^0\leq_1(a\wedge b)^0$ according to (iii) and hence $a\wedge b^0\leq_1a\wedge(a\wedge b)^0$.
\end{enumerate}
\end{proof}

From (iii) of Theorem~\ref{th1} there follows immediately $x^0\wedge y^0\leq(x\wedge y)^0$.

\begin{example}
Consider the meet-semilattice visualized in Fig.~1:

\vspace*{-3mm}

\begin{center}
\setlength{\unitlength}{7mm}
\begin{picture}(6,4)
\put(3,1){\circle*{.3}}
\put(1,3){\circle*{.3}}
\put(3,3){\circle*{.3}}
\put(5,3){\circle*{.3}}
\put(3,1){\line(-1,1)2}
\put(3,1){\line(0,1)2}
\put(3,1){\line(1,1)2}
\put(2.85,.3){$0$}
\put(.85,3.4){$a$}
\put(2.85,3.4){$b$}
\put(4.85,3.4){$c$}
\put(2.2,-.75){{\rm Fig.~1}}
\put(.7,-1.75){Meet-semilattice}

\end{picture}
\end{center}

\vspace*{8mm}

We have
\begin{align*}
                   a & =\{b,c\}^0=a^{00}, \\
	               0^0 & =\{a,b,c\}, \\
a\wedge(a\wedge b)^0 & =a\wedge0^0=a\wedge\{a,b,c\}=\{0,a\}=a\wedge\{a,c\}=a\wedge b^0
\end{align*}
in accordance with {\rm(ii)}, {\rm(iv)} and {\rm(viii)} of Theorem~\ref{th1}, respectively.
\end{example}

\begin{example}
Consider the modular lattice $\mathbf L$ depicted in Fig.~2:

\vspace*{-3mm}

\begin{center}
\setlength{\unitlength}{7mm}
\begin{picture}(10,8)
\put(3,1){\circle*{.3}}
\put(1,3){\circle*{.3}}
\put(3,3){\circle*{.3}}
\put(5,3){\circle*{.3}}
\put(7,3){\circle*{.3}}
\put(3,5){\circle*{.3}}
\put(5,5){\circle*{.3}}
\put(7,5){\circle*{.3}}
\put(9,5){\circle*{.3}}
\put(7,7){\circle*{.3}}
\put(3,1){\line(-1,1)2}
\put(3,1){\line(0,1)4}
\put(3,1){\line(1,1)2}
\put(3,1){\line(2,1)4}
\put(7,7){\line(-2,-1)4}
\put(7,7){\line(-1,-1)2}
\put(7,7){\line(0,-1)4}
\put(7,7){\line(1,-1)2}
\put(1,3){\line(2,1)4}
\put(3,3){\line(2,1)4}
\put(5,3){\line(2,1)4}
\put(1,3){\line(1,1)2}
\put(7,3){\line(1,1)2}
\put(5,3){\line(-1,1)2}
\put(7,3){\line(-1,1)2}
\put(2.85,.3){$0$}
\put(.35,2.85){$a$}
\put(2.35,2.85){$b$}
\put(4.35,2.85){$c$}
\put(7.4,2.85){$d$}
\put(2.35,4.85){$e$}
\put(5.4,4.85){$f$}
\put(7.4,4.85){$g$}
\put(9.4,4.85){$h$}
\put(6.85,7.4){$1$}
\put(4.2,-.75){{\rm Fig.~2}}
\put(2.8,-1.75){Modular lattice}
\end{picture}
\end{center}

\vspace*{8mm}

We have
\begin{align*}
       a^{00} & =\{g,h\}^0=a, \\
       f^{00} & =\{b,c\}^0=f, \\
a^0\wedge e^0 & =\{g,h\}\wedge d=d\neq\{g,h\}=a^0=(a\wedge e)^0.
\end{align*}
Hence $a$ and $f$ are sharp and the equality $x^0\wedge y^0=(x\wedge y)^0$ does not hold in general. In $\mathbf L$ from Figure~2 we have
\[
e^0=d\text{ and }d^0=e.
\]
Since $e\wedge d=0$ and $e\vee d=1$, $\{0,d,e,1\}$ is a complemented lattice.
\end{example}

If $a^0$ is a singleton, it need not be a complement of $a$, even if the semilattice is a lattice. E.g., consider the four-element lattice with atoms $a$ and $b$ and with an additional greatest element $1$. Then $a^0=b$, but $a\vee b\neq1$, i.e., $a^0$ is not a complement of $a$.

For every cardinal number $n$ let $\mathbf M_n=(M_n,\vee,\wedge)$ denote the bounded modular lattice of length $2$ having $n$ atoms.

The situation from Figure~2 can be generalized as follows.

\begin{remark}\label{rem1}
Every element of a direct product of a Boolean algebra and an arbitrary number of lattices $\mathbf M_n$ {\rm(}possibly different $n${\rm)} is sharp.
\end{remark}

This follows immediately from the fact that every element of a Boolean algebra and every element of the lattice $\mathbf M_n$ is sharp.

However, if the lattice $\mathbf L$ is not a direct product of two-element lattices and various $\mathbf M_n$ then the assertion of Remark~\ref{rem1} need not hold, see the following example.

\begin{example}
Consider the lattice visualized in Fig.~3:

\vspace*{-3mm}

\begin{center}
\setlength{\unitlength}{7mm}
\begin{picture}(6,10)
\put(3,1){\circle*{.3}}
\put(1,3){\circle*{.3}}
\put(3,3){\circle*{.3}}
\put(5,3){\circle*{.3}}
\put(1,5){\circle*{.3}}
\put(3,5){\circle*{.3}}
\put(1,7){\circle*{.3}}
\put(3,7){\circle*{.3}}
\put(3,9){\circle*{.3}}
\put(3,1){\line(-1,1)2}
\put(3,1){\line(0,1)8}
\put(3,1){\line(1,1)2}
\put(1,3){\line(0,1)4}
\put(1,3){\line(1,1)2}
\put(5,3){\line(-1,1)2}
\put(1,7){\line(1,1)2}
\put(2.85,.3){$0$}
\put(.35,2.85){$a$}
\put(3.4,2.85){$b$}
\put(5.4,2.85){$c$}
\put(.35,4.85){$d$}
\put(3.4,4.85){$e$}
\put(.35,6.85){$f$}
\put(3.4,6.85){$g$}
\put(2.85,9.4){$1$}
\put(2.2,-.75){{\rm Fig.~3}}
\put(2,-1.75){Lattice}
\end{picture}
\end{center}

\vspace*{8mm}

We have
\begin{align*}
              a^{00} & =\{b,c\}^0=f\neq a, \\
             a^{000} & =f^0=\{b,c\}=a^0, \\
              b^{00} & =\{c,f\}^0=b, \\
(a^0\wedge b^0)^{00} & =(\{b,c\}\wedge\{c,f\})^{00}=\{0,c\}^{00}=\{b,f\}^0=c\neq\{0,c\}=a^0\wedge b^0, \\
(c^0\wedge f^0)^{00} & =(\{b,f\}\wedge\{b,c\})^{00}=\{0,b\}^{00}=\{c,f\}^0=b\neq\{0,b\}=e^0\wedge f^0.
\end{align*}
Hence $a$ is not sharp, $b$ is sharp and the equality $(x^0\wedge y^0)^{00}=x^0\wedge y^0$ does not hold in general.
\end{example}

We are going to show that the operator $^0$ can be characterized by means of four simple conditions.

\begin{theorem}
Let $(S,\wedge,0)$ be a meet-semilattice with $0$ satisfying the {\rm ACC} and $^0$ a unary operator on $S$. Then the following are equivalent:
\begin{enumerate}[{\rm(i)}]
\item $x^0=\Max\{y\in S\mid x\wedge y=0\}$ for all $x\in S$,
\item the operator $^0$ satisfies the following conditions:
\begin{enumerate}[{\rm(P1)}]
\item $x^0$ is an antichain,
\item $x\wedge0^0\approx_1x$,
\item $x\wedge x^0\approx0$,
\item $x\wedge(x\wedge y)^0\approx_1x\wedge y^0$.
\end{enumerate}
\end{enumerate}
\end{theorem}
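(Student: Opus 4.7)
The plan is to handle the two implications separately. The direction (i)$\Rightarrow$(ii) is immediate from Theorem~\ref{th1}: (P1), (P2), (P3), (P4) coincide with items (i), (vii), (v), (viii) of that theorem respectively (the assertion $a\wedge a^0=0$ from item (v) says every element of $a\wedge a^0$ equals $0$, which under the singleton convention is exactly the statement $x\wedge x^0\approx 0$).

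For the substantive direction (ii)$\Rightarrow$(i), set $N(x):=\{y\in S\mid x\wedge y=0\}$; the goal is $x^0=\Max N(x)$. The inclusion $x^0\subseteq N(x)$ is immediate from (P3), and $\Max N(x)\neq\emptyset$ since $0\in N(x)$ and the ACC holds. The key step is the following consequence of (P2) and (P4): whenever $a\wedge b=0$, applying (P4) to $(a,b)$ yields $a\wedge 0^0\approx_1 a\wedge b^0$, and combining with (P2) gives $a\approx_1 a\wedge b^0$; in particular $a\leq_1 a\wedge b^0$, which, unpacking via $a\leq a\wedge c\leq c$, produces some $c\in b^0$ with $a\leq c$.

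Apply this observation in two ways. First, given $z\in x^0$ and $w\in N(x)$ with $z\leq w$, the choice $(a,b):=(w,x)$ (legitimate since $w\wedge x=0$) produces $z'\in x^0$ with $w\leq z'$; then $z\leq w\leq z'$ with $z,z'\in x^0$ forces $z=z'$ by the antichain condition (P1), so $z=w$, proving $x^0\subseteq\Max N(x)$. Second, given $m\in\Max N(x)$, the choice $(a,b):=(m,x)$ produces $w\in x^0$ with $m\leq w$; by (P3) we have $w\in N(x)$, and maximality of $m$ forces $m=w\in x^0$, proving $\Max N(x)\subseteq x^0$. Together these give $x^0=\Max N(x)$. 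The only nontrivial insight, and the main obstacle, is distilling the lemma ``whenever $a\wedge b=0$, there exists $c\in b^0$ with $a\leq c$'' from (P2) and (P4); once this is in hand, both inclusions follow by essentially the same short argument, with (P1) and (P3) supplying the final matches.
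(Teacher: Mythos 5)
Your proposal is correct and follows essentially the same route as the paper: the key step in both is the chain $a\approx_1 a\wedge 0^0=a\wedge(a\wedge b)^0\approx_1 a\wedge b^0$ obtained from (P2) and (P4) when $a\wedge b=0$ (i.e.\ $a\leq_1 b^0$), with (P3) giving the reverse containment and (P1) used to identify the maximal elements. The paper merely packages the final step as the set identity $\Max\{y\in S\mid x\wedge y=0\}=\Max\{y\in S\mid y\leq_1 x^0\}=x^0$ rather than proving the two inclusions into $\Max N(x)$ directly, which is only a difference in presentation.
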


\begin{proof}
$\text{}$ \\
(i) $\Rightarrow$ (ii): \\
This follows from Theorem~\ref{th1}. \\
(ii) $\Rightarrow$ (i): \\
If $x\wedge y=0$ then according to (P2) and (P4) we have
\[
y\approx_1y\wedge0^0=y\wedge(x\wedge y)^0=y\wedge(y\wedge x)^0\approx_1y\wedge x^0\leq_1x^0
\]
and hence $y\leq_1x^0$. Conversely, if $y\leq_1x^0$ then according to (P3) we have
\[
x\wedge y\leq_1x\wedge x^0=0
\]
and hence $x\wedge y=0$. This shows that $x\wedge y=0$ is equivalent to $y\leq_1x^0$. We conclude
\[
\Max\{y\in S\mid x\wedge y=0\}=\Max\{y\in S\mid y\leq_1x^0\}=x^0.
\]
The last equality can be seen as follows. Let $z\in\Max\{y\in S\mid y\leq_1x^0\}$. Then $z\leq_1x^0$, i.e.\ there exists some $u\in x^0$ with $z\leq u$. We have $u\leq_1x^0$. Now $z<u$ would imply $z\notin\Max\{y\in S\mid y\leq_1x^0\}$, a contradiction. This shows $z=u\in x^0$. Conversely, assume $z\in x^0$. Then $z\leq_1x^0$. If $z\notin\Max\{y\in S\mid y\leq_1x^0\}$ then there would exist some $u\in S$ with $z<u\leq_1x^0$ and hence there would exist some $w\in x^0$ with $z<u\leq w$ contradicting (P1). This shows $z\in\Max\{y\in S\mid y\leq_1x^0\}$.
\end{proof}

\section{Unsharp implication}

Now we extend the operation of relative pseudocomplementation to arbitrary meet-semilattices with $0$ satisfying the ACC as follows: Let $\mathbf S=(S,\wedge,0)$ be a meet-semilattice with $0$ satisfying the {\rm ACC}, $a,b\in S$ and $A,B\subseteq S$. We define
\[
a\rightarrow b:=\Max\{x\in S\mid a\wedge x\leq b\}.
\]
Thus $\rightarrow$ is a binary operator on $\mathbf S$ assigning to every pair $(x,y)\in S^2$ the non-void subset $x\rightarrow y\subseteq S$. It is evident that
\[
x^0=x\rightarrow0\text{ for each }x\in S.
\]
Moreover, we define
\[
A\rightarrow B:=\Max\{x\in S\mid A\wedge x\leq B\}.
\]

\begin{example}
The ``operation table'' of the operator $\rightarrow$ in the meet-semilattice of Figure~1 looks as follows {\rm(}we write $abc$ instead of $\{a,b,c\}$ and so on{\rm)}:
\[
\begin{array}{c|cccc}
\rightarrow &  0  &  a  &  b  &  c \\
\hline
     0      & abc & abc & abc & abc \\
     a      & bc  & abc & bc  & ab \\
     b      & ac  & ac  & abc & ac \\
     c      & ab  & ab  & ab  & abc
\end{array}
\]
\end{example}

\begin{example}
The ``operation table'' of the operator $\rightarrow$ in the meet-semilattice of Figure~3 looks as follows {\rm(}we write $bc$ instead of $\{b,c\}$ and so on{\rm)}:
\[
\begin{array}{c|ccccccccc}
\rightarrow & 0  & a  & b  & c  & d  & e  & f  & g & 1 \\
\hline
     0      & 1  & 1  & 1  & 1  & 1  & 1  & 1  & 1 & 1 \\
		 a      & bc & 1  & bc & bc & 1  & 1  & 1  & 1 & 1 \\
		 b      & cf & cf & 1  & cf & cf & 1  & cf & 1 & 1 \\
		 c      & bf & bf & bf & 1  & bf & 1  & bf & 1 & 1 \\
		 d      & bc & g  & bc & bc & 1  & g  & 1  & g & 1 \\
		 e      & 0  & f  & b  & c  & f  & 1  & f  & 1 & 1 \\
		 f      & bc & g  & bc & bc & dg & g  & 1  & g & 1 \\
		 g      & 0  & f  & b  & c  & f  & ef & f  & 1 & 1 \\
		 1      & 0  & a  & b  & c  & d  & e  & f  & g & 1
\end{array}
\]
\end{example}

The following properties of the binary operator $\rightarrow$ can be proved.

\begin{theorem}\label{th2}
Let $\mathbf S=(S,\wedge,0)$ be a meet-semilattice with $0$ satisfying the {\rm ACC} and $a,b,c\in S$. Then the following holds:
\begin{enumerate}[{\rm(i)}]
\item $a\rightarrow b$ is an antichain,
\item $a\leq b$ implies $a\rightarrow b=\Max S$,
\item $b\in\Max S$ implies $b\in a\rightarrow b$,
\item $b\leq_1a\rightarrow b$,
\item $a\leq_1(a\rightarrow b)\rightarrow b$,
\item $a\leq b$ implies $c\rightarrow a\leq_1c\rightarrow b$ and $b\rightarrow c\leq_1a\rightarrow c$.
\item $a\wedge(a\rightarrow b)\approx_1a\wedge b$,
\item $a\rightarrow(b\wedge c)\approx_1(a\rightarrow b)\wedge(a\rightarrow c)$,
\item $(a\rightarrow b)\wedge b\approx_1b$,
\item if $\mathbf S$ is bounded then $1\rightarrow b=b$,
\item $a\wedge(b\rightarrow b)\approx_1a$,
\item if $\mathbf S$ is bounded then $a\rightarrow b=1$ if and only if $a\leq b$,
\item $b\leq_1a\rightarrow(a\wedge b)$.
\end{enumerate}
\end{theorem}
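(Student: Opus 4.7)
The plan is to verify each of the thirteen assertions by direct appeal to the definition $a\rightarrow b=\Max\{x\in S\mid a\wedge x\leq b\}$, exploiting the ACC to produce maximal witnesses whenever a $\leq_1$-inequality must be established. I would dispatch the bookkeeping items first: (i) is immediate from the definition of $\Max$; (ii) follows since $a\leq b$ makes $\{x\mid a\wedge x\leq b\}=S$; (iii) holds because any $b\in\Max S$ already belongs to the defining set (as $a\wedge b\leq b$) and remains maximal there; (iv) uses that $b$ lies in the defining set, so ACC produces a maximal element above it; (x) and (xii) are the bounded specializations, obtained by plugging in $x=1$ in the defining set.

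Next I would handle the monotonicity statements (vi) and the two witness claims (v) and (xiii) by a common pattern: take $z$ in the initial set, show $z$ lies in the new defining set, then invoke ACC to enlarge $z$ to a maximal element. In particular (v) comes from observing that $a$ itself satisfies $(a\rightarrow b)\wedge a\leq b$, which is exactly the defining property of $a\rightarrow b$; likewise (xiii) from $a\wedge b\leq a\wedge b$.

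The substantive work is the four $\approx_1$-equivalences (vii), (viii), (ix) and (xi). For (vii), the easy direction computes $a\wedge z\leq a\wedge b$ for $z\in a\rightarrow b$, while the harder direction uses (iv) to pick $z\in a\rightarrow b$ with $b\leq z$, whence $a\wedge b\leq a\wedge z$. For (viii), the $\leq_1$-direction picks $z\in a\rightarrow(b\wedge c)$, notes that $z$ lies in both defining sets for $a\rightarrow b$ and $a\rightarrow c$, uses ACC to produce maximal $u,v$ above $z$, and concludes $z\leq u\wedge v$; the $\geq_1$-direction observes that for $u\in a\rightarrow b$ and $v\in a\rightarrow c$ one has $a\wedge(u\wedge v)\leq b\wedge c$, so ACC yields a witness above $u\wedge v$. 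Items (ix) and (xi) reduce in the nontrivial direction to (iv) and (ii) respectively: for (ix) one chooses $z\in a\rightarrow b$ with $b\leq z$; for (xi) one exploits $b\rightarrow b=\Max S$ and picks $m\in\Max S$ with $a\leq m$.

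The main obstacle I anticipate is notational rather than conceptual: because $a\rightarrow b$ is in general a set, every inequality of the form $u\leq_1 V$ must be carefully read as the existence of some $v\in V$ with $u\leq v$, and ACC is invoked repeatedly to supply that $v$. The only step requiring real care is (viii), where one must first meet two separately chosen maximal witnesses $u$ and $v$ and then check that the meet still lies in the relevant defining set before applying ACC one final time; everything else is a direct unwinding of the definitions.
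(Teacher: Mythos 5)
Your proposal is correct and follows essentially the same route as the paper's own proof: direct unwinding of the definition for the easy items, monotonicity via set inclusion for (vi), and for each $\approx_1$-equivalence one trivial $\leq$-direction plus one direction obtained by exhibiting an element of the defining set and invoking the ACC to lift it to a maximal witness (e.g.\ using (iv) for (vii) and (ix), and (ii) for (xi)). The only cosmetic remark is that for (x) the argument is better described as computing $\Max\{x\in S\mid x\leq b\}=b$ than as ``plugging in $x=1$,'' but this does not affect correctness.
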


\begin{proof}
\
\begin{enumerate}
\item[(i)] This is clear.
\item[(ii)]\hspace*{-2mm}, (iv), (x), (xii) and (xiii) follow immediately from the definition of $\rightarrow$.
\item[(iii)] If $b\in\Max S$ then because of $a\wedge b\leq b$ we have $b\in\Max\{x\in S\mid a\wedge x\leq b\}=a\rightarrow b$.
\item[(v)] Since $a\wedge x\leq b$ for all $x\in a\rightarrow b$ we have $a\wedge(a\rightarrow b)\leq b$, i.e.\ $(a\rightarrow b)\wedge a\leq b$.
\item[(vi)] If $a\leq b$ then
\begin{align*}
\{x\in S\mid c\wedge x\leq a\} & \subseteq\{x\in S\mid c\wedge x\leq b\}, \\
\{x\in S\mid b\wedge x\leq c\} & \subseteq\{x\in S\mid a\wedge x\leq c\}.
\end{align*}
\item[(vii)] We have $a\wedge x\leq b$ and hence $a\wedge x\leq a\wedge b$ for all $x\in a\rightarrow b$ and hence $a\wedge b\leq_1a\wedge(a\rightarrow b)\leq a\wedge b$ according to (iv).
\item[(viii)] According to (vii) we have $a\rightarrow(b\wedge c)\leq_1(a\rightarrow b)\wedge(a\rightarrow c)$. Conversely, assume $d\in a\rightarrow b$ and $e\in a\rightarrow c$. Then $a\wedge d\leq b$ and $a\wedge e\leq c$ and hence $a\wedge(d\wedge e)\leq b\wedge c$ which implies $d\wedge e\leq_1a\rightarrow(b\wedge c)$. This shows $(a\rightarrow b)\wedge(a\rightarrow c)\leq_1a\rightarrow(b\wedge c)$.
\item[(ix)] We have $b\leq_1a\rightarrow b$ according to (iv) and hence $b\leq_1(a\rightarrow b)\wedge b\leq b$.
\item[(xi)] According to (ii) we have $a\leq_1\Max S=b\rightarrow b$ and hence $a\leq_1a\wedge(b\rightarrow b)\leq a$.
\end{enumerate}
\end{proof}

From Theorem~\ref{th2} it is evident that the binary operator $\rightarrow$ shares properties of the logical connective implication in intuitionistic logic despite the fact that it is {\em unsharp}, i.e.\ for $x,y\in S$ the result of $x\rightarrow y$ need not be a singleton. Hence it extends the intuitionistic logic based on a Heyting algebra $(L,\vee,\wedge,*,0)$ where again $\vee$ formalizes disjunction, $\wedge$ formalizes conjunction, but now $\rightarrow$ formalizes unsharp implication and $^0$ formalizes unsharp negation.

\begin{example}
Consider the modular lattice visualized in Fig.~4:

\vspace*{-3mm}

\begin{center}
\setlength{\unitlength}{7mm}
\begin{picture}(8,8)
\put(3,1){\circle*{.3}}
\put(1,3){\circle*{.3}}
\put(3,3){\circle*{.3}}
\put(5,3){\circle*{.3}}
\put(3,5){\circle*{.3}}
\put(5,5){\circle*{.3}}
\put(7,5){\circle*{.3}}
\put(5,7){\circle*{.3}}
\put(3,1){\line(-1,1)2}
\put(3,1){\line(0,1)4}
\put(3,1){\line(1,1)4}
\put(5,3){\line(-1,1)2}
\put(5,7){\line(-1,-1)4}
\put(5,7){\line(0,-1)4}
\put(5,7){\line(1,-1)2}
\put(2.85,.3){$0$}
\put(.35,2.85){$a$}
\put(2.35,2.85){$b$}
\put(5.4,2.85){$c$}
\put(2.35,4.85){$d$}
\put(5.4,4.85){$e$}
\put(7.4,4.85){$f$}
\put(4.85,7.4){$1$}
\put(3.2,-.75){{\rm Fig.~4}}
\put(1.9,-1.75){Modular lattice}
\end{picture}
\end{center}

\vspace*{8mm}

Then
\begin{align*}
                       e & \leq_1\{d,e\}=\{e,f\}\rightarrow e=(d\rightarrow e)\rightarrow e, \\
 d\wedge(d\rightarrow e) & =d\wedge\{e,f\}=c=d\wedge e, \\
(d\rightarrow e)\wedge e & =\{e,f\}\wedge e=\{c,e\}\approx_1e
\end{align*}
in accordance with {\rm(v)}, {\rm(vii)} and {\rm(ix)} of Theorem~\ref{th2}, respectively.
\end{example}

\begin{remark}
It is easy to see that the operation $\wedge$ and the operator $\rightarrow$ are related by so-called {\em unsharp adjointness}, i.e.
\[
a\wedge b\leq c\text{ if and only if }a\leq_1b\rightarrow c.
\]
Since the operation $\wedge$ is associative, commutative and monotone, it can be considered as a {\em t-norm}. Thus the semilattice $(S,\wedge,\rightarrow)$ endowed with the operator $\rightarrow$ is an {\em unsharply residuated semilattice}. Moreover, by {\rm(viii)} of Theorem~\ref{th2} we have
\[
a\wedge(a\rightarrow b)\approx_1a\wedge b
\]
showing that $(S,\wedge,\rightarrow)$ satisfies divisibility.
\end{remark}

Similarly as for the unary operator $^0$ we can characterize the binary operator $\rightarrow$ on a meet-semilattice with $0$ satisfying the ACC as follows.

\begin{theorem}
Let $(S,\wedge,0)$ be a meet-semilattice with $0$ satisfying the {\rm ACC} and $\rightarrow$ a binary operator on $S$. Then the following are equivalent:
\begin{enumerate}[{\rm(i)}]
\item $x\rightarrow y=\Max\{z\in S\mid x\wedge z\leq y\}$ for all $x,y\in S$,
\item The operator $\rightarrow$ satisfies the following conditions:
\begin{enumerate}[{\rm(R1)}]
\item $x\rightarrow y$ is an antichain,
\item $x\wedge(x\rightarrow y)\approx_1x\wedge y$,
\item $(x\rightarrow y)\wedge y\approx_1y$,
\item $x\rightarrow(y\wedge z)\approx_1(x\rightarrow y)\wedge(x\rightarrow z)$,
\item $x\wedge(y\rightarrow y)\approx_1x$,
\item $y\leq z$ implies $x\rightarrow y\leq_1x\rightarrow z$.
\end{enumerate}
\end{enumerate}
\end{theorem}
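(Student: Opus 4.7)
The plan is to mirror the two-step strategy of the preceding characterization theorem for $^0$. The implication (i)~$\Rightarrow$~(ii) is immediate from Theorem~\ref{th2}: the six axioms (R1)--(R6) are respectively items (i), (vii), (ix), (viii), (xi) and (vi) of that theorem. For (ii)~$\Rightarrow$~(i), I would first establish the unsharp adjointness
\[
x\wedge z\leq y\quad\Longleftrightarrow\quad z\leq_1 x\rightarrow y
\]
using the axioms alone, and then deduce the $\Max$-formula by copying the closing paragraph of the previous proof. The direction $z\leq_1 x\rightarrow y\Rightarrow x\wedge z\leq y$ is short: pick $u\in x\rightarrow y$ with $z\leq u$; by (R2) one has $x\wedge u\leq x\wedge y\leq y$, and therefore $x\wedge z\leq x\wedge u\leq y$.

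The converse direction is the substantive step, and I plan it in three moves. First, (R3) alone yields the auxiliary fact $z\leq_1 x\rightarrow z$: the relation $(x\rightarrow z)\wedge z\approx_1 z$ produces some $u\in x\rightarrow z$ with $z\leq u\wedge z$, whence $z\leq u$. Second, I would prove $x\rightarrow(x\wedge z)\approx_1 x\rightarrow z$ by combining (R4) and (R5): (R4) reduces the task to comparing $(x\rightarrow x)\wedge(x\rightarrow z)$ with $x\rightarrow z$; the direction $\leq_1$ is trivial, and for the reverse (R5) applied to each $v\in x\rightarrow z$ yields $u\in x\rightarrow x$ with $v\leq u$, so that $v=v\wedge u\in(x\rightarrow x)\wedge(x\rightarrow z)$. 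Third, assuming $x\wedge z\leq y$, (R6) gives $x\rightarrow(x\wedge z)\leq_1 x\rightarrow y$; chaining with the two previous facts yields $z\leq_1 x\rightarrow z\approx_1 x\rightarrow(x\wedge z)\leq_1 x\rightarrow y$, as required.

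With adjointness in hand, the identification $x\rightarrow y=\Max\{z\in S\mid x\wedge z\leq y\}$ is a verbatim repeat of the end of the previous characterization proof: the sets $\{z:x\wedge z\leq y\}$ and $\{z:z\leq_1 x\rightarrow y\}$ coincide, and the antichain axiom (R1) forces the $\Max$ of the right-hand one to equal $x\rightarrow y$ itself. I expect the main obstacle to be the second move above, where the productive way to read (R5) is as saying that elements of any $y\rightarrow y$ are ``top-like'' in the $\leq_1$-sense; this is exactly what is needed to absorb the extra factor $x\rightarrow x$ that (R4) inserts when rewriting $x\rightarrow(x\wedge z)$, and none of the other axioms appears to play that role.
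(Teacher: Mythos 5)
Your proposal is correct and follows essentially the same route as the paper: the forward direction cites the same items of Theorem~\ref{th2}, and your three moves for the converse reproduce exactly the paper's chain $z\approx_1(x\rightarrow z)\wedge z\leq_1x\rightarrow z\approx_1(x\rightarrow x)\wedge(x\rightarrow z)\approx_1x\rightarrow(x\wedge z)\leq_1x\rightarrow y$ together with the concluding $\Max$ argument via (R1). No substantive differences.
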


\begin{proof}
$\text{}$ \\
(i) $\Rightarrow$ (ii): \\
This follows from Theorem~\ref{th2}. \\
(ii) $\Rightarrow$ (i): \\
If $x\wedge z\leq y$ then according to (R3), (R5), (R4) and (R6) we have
\[
z\approx_1(x\rightarrow z)\wedge z\leq_1x\rightarrow z\approx_1(x\rightarrow x)\wedge(x\rightarrow z)\approx_1x\rightarrow(x\wedge z)\leq_1x\rightarrow y
\]
and hence $z\leq_1x\rightarrow y$. Conversely, if $z\leq_1x\rightarrow y$ then according to (R2) we have
\[
x\wedge z\leq_1x\wedge(x\rightarrow y)\approx_1x\wedge y\leq y
\]
and hence $x\wedge z\leq y$. This shows that $x\wedge z\leq y$ is equivalent to $z\leq_1x\rightarrow y$. We conclude
\[
\Max\{z\in S\mid x\wedge z\leq y\}=\Max\{z\in S\mid z\leq_1x\rightarrow y\}=x\rightarrow y.
\]
The last equality can be seen as follows. Let $u\in\Max\{z\in S\mid z\leq_1x\rightarrow y\}$. Then $u\leq_1x\rightarrow y$, i.e.\ there exists some $v\in x\rightarrow y$ with $u\leq v$. We have $v\leq_1x\rightarrow y$. Now $u<v$ would imply $u\notin\Max\{z\in S\mid z\leq_1x\rightarrow y\}$, a contradiction. This shows $u=v\in x\rightarrow y$. Conversely, assume $u\in x\rightarrow y$. Then $u\leq_1x\rightarrow y$. If $u\notin\Max\{z\in S\mid z\leq_1x\rightarrow y\}$ then there would exist some $v\in S$ with $u<v\leq_1x\rightarrow y$ and hence there would exist some $w\in x\rightarrow y$ with $u<v\leq w$ contradicting (R1). This shows $u\in\Max\{z\in S\mid z\leq_1x\rightarrow y\}$.
\end{proof}

\begin{example}
Consider the lattice from Figure~4. Then
\begin{align*}
 d\wedge(d\rightarrow e) & =d\wedge\{e,f\}=c=d\wedge e, \\
(d\rightarrow e)\wedge e & =e\wedge e=e, \\
 d\rightarrow(e\wedge f) & =d\rightarrow c=\{e,f\}\approx_1\{c,e,f\}=\{e,f\}\wedge\{e,f\}=(d\rightarrow e)\wedge(d\rightarrow f)
\end{align*}
in accordance with {\rm(R2)}, {\rm(R3)} and {\rm(R4)}, respectively.
\end{example}

\section{Deductive systems}

It is well-known that the connective implication in intuitionistic logic is closely related to the so-called deductive systems in the corresponding Brouwerian semilattice. In what follows we show that a certain modification of the concept of a deductive system plays a similar role for logics with unsharp implication. We define

\begin{definition}
A {\em deductive system} of a meet-semilattice $\mathbf S=(S,\wedge,0)$ with $0$ satisfying the {\rm ACC} is a subset $D$ of $S$ satisfying the following conditions for $x,y\in S$:
\begin{enumerate}[{\rm(D1)}]
\item $(\Max S)\cap D\neq\emptyset$,
\item $x\in D$ and $(x\rightarrow y)\cap D\neq\emptyset$ imply $y\in D$.
\end{enumerate}
\end{definition}

Recall that a {\em filter} of a meet-semilattice $\mathbf S=(S,\wedge)$ is a non-empty subset $F$ of $S$ satisfying the following conditions for $x,y\in S$:
\begin{enumerate}[{\rm(F1)}]
\item $x,y\in F$ implies $x\wedge y\in F$,
\item $x\in F$ and $x\leq y$ imply $y\in F$.
\end{enumerate}
It is clear that if $S$ is finite then all filters of $\mathbf S$ are given by the sets $[x):=\{y\in S\mid x\leq y\}$, $x\in S$, and hence the poset of all filters of $\mathbf S$ is dually isomorphic to $\mathbf S$ and therefore a join-semilattice where $[x)\vee[y)=[x\wedge y)$ for all $x,y\in S$.

For every non-empty subset $A$ of the universe of a meet-semilattice $(S,\wedge)$ we define a binary relation $\Theta(A)$ on $S$ as follows:
\[
(x,y)\in\Theta(A)\text{ if there exists some }a\in A\text{ with }x\wedge a=y\wedge a.
\]
Although the following result is known, for the reader's convenience we present the proof.

\begin{lemma}\label{lem1}
Let $\mathbf S=(S,\wedge,1)$ be a meet-semilattice with $1$ and $\Phi\in\Con\mathbf S$. Then the following holds:
\begin{enumerate}[{\rm(i)}]
\item $[1]\Phi$ is an filter of $\mathbf S$,
\item $\Theta([1]\Phi)\subseteq \Phi$.
\end{enumerate}
\end{lemma}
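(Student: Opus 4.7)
The plan is to verify each part by elementary congruence manipulations, using only that $\Phi$ is an equivalence relation compatible with the semilattice operation $\wedge$.

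For part (i), I would check the three defining properties of a filter directly. Non-emptiness is immediate: $1 \in [1]\Phi$ by reflexivity of $\Phi$. For (F1), suppose $x,y \in [1]\Phi$, so $x\mathrel{\Phi}1$ and $y\mathrel{\Phi}1$; compatibility of $\Phi$ with $\wedge$ yields $x\wedge y\mathrel{\Phi}1\wedge 1=1$, i.e.\ $x\wedge y\in[1]\Phi$. For (F2), suppose $x\in[1]\Phi$ and $x\leq y$. The key trick is to rewrite the order relation as $x = x\wedge y$ and note that $y=1\wedge y$. Meeting both sides of $x\mathrel{\Phi}1$ with $y$ (using compatibility) gives $x\wedge y\mathrel{\Phi}1\wedge y$, i.e.\ $x\mathrel{\Phi}y$. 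Combining with $x\mathrel{\Phi}1$ by transitivity yields $y\mathrel{\Phi}1$, so $y\in[1]\Phi$.

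For part (ii), let $(x,y)\in\Theta([1]\Phi)$. By definition of $\Theta$, there exists $a\in[1]\Phi$ with $x\wedge a=y\wedge a$. Since $a\mathrel{\Phi}1$, compatibility of $\Phi$ with $\wedge$ gives $x\wedge a\mathrel{\Phi}x\wedge 1=x$ and similarly $y\wedge a\mathrel{\Phi}y$. Using the equality $x\wedge a=y\wedge a$ and transitivity, we chain
\[
x\mathrel{\Phi}x\wedge a=y\wedge a\mathrel{\Phi}y,
\]
so $(x,y)\in\Phi$, proving $\Theta([1]\Phi)\subseteq\Phi$.

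There is no real obstacle here: both parts are routine consequences of the compatibility and equivalence properties of a congruence. The only step requiring a moment of thought is (F2), where the partial order must be rephrased meet-theoretically as $x=x\wedge y$ so that compatibility with $\wedge$ can be applied; after that, everything reduces to a single application of transitivity.
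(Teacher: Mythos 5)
Your proof is correct and follows essentially the same route as the paper's: (F1) via compatibility applied to $1\wedge1=1$, (F2) via meeting $x\mathrel{\Phi}1$ with $y$ and using $y=1\wedge y$, and (ii) via the chain $x=x\wedge1\mathrel{\Phi}x\wedge a=y\wedge a\mathrel{\Phi}y\wedge1=y$. No gaps; nothing further is needed.
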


\begin{proof}
\
\begin{enumerate}[(i)]
\item
\begin{enumerate}[(F1)]
\item If $a,b\in[1]\Phi$ then $a\wedge b\in[1\wedge1]\Phi=[1]\Phi$.
\item If $a\in[1]\Phi$, $b\in S$ and $a\leq b$ then $b=1\wedge b\in[a\wedge b]\Phi=[a]\Phi=[1]\Phi$.
\end{enumerate}
This shows that $[1]\Phi$ is a filter of $\mathbf S$.
\item If $(a,b)\in\Theta([1]\Phi)$ then there exists some $c\in[1]\Phi$ with $a\wedge c=b\wedge c$ whence
\[
a=a\wedge1\mathrel\Phi a\wedge c=b\wedge c\mathrel\Phi b\wedge1=b
\]
which shows $(a,b)\in\Phi$.
\end{enumerate}
\end{proof}

Although our definition of a deductive system differs from that known for relatively pseudocomplemented semilattices, we are still able to prove the following relationships between the concepts mentioned before.

\begin{theorem}\label{th3}
Let $\mathbf S=(S,\wedge,0,1)$ be a bounded meet-semilattice satisfying the {\rm ACC} and $D$ a non-empty subset of $S$. Then the following are equivalent:
\begin{enumerate}[{\rm(i)}]
\item $D$ a deductive system of $\mathbf S$,
\item $D$ is an filter of $\mathbf S$,
\item $\Theta(D)\in\Con\mathbf S$ and $D=[1]\big(\Theta(D)\big)$.
\end{enumerate}
\end{theorem}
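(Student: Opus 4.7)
The plan is to prove the equivalences in the cycle (i)$\Rightarrow$(ii)$\Rightarrow$(iii)$\Rightarrow$(ii)$\Rightarrow$(i), using Lemma~\ref{lem1} to shortcut the implication (iii)$\Rightarrow$(ii). The boundedness assumption is what makes the definitions match up: since $\mathbf S$ has top element $1$ and satisfies ACC, $\Max S=\{1\}$, so (D1) is equivalent to $1\in D$.

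For (i)$\Rightarrow$(ii), I first observe that (D1) together with $\Max S=\{1\}$ gives $1\in D$. To verify (F2), if $x\in D$ and $x\leq y$, then by Theorem~\ref{th2}(ii) we have $x\rightarrow y=\Max S=\{1\}$, so $(x\rightarrow y)\cap D=\{1\}\neq\emptyset$, and (D2) yields $y\in D$. For (F1), given $x,y\in D$, apply Theorem~\ref{th2}(xiii) to obtain $y\leq_1 x\rightarrow(x\wedge y)$, i.e.\ there exists $z\in x\rightarrow(x\wedge y)$ with $y\leq z$; by the just-established (F2), $z\in D$, so $(x\rightarrow(x\wedge y))\cap D\neq\emptyset$, and (D2) gives $x\wedge y\in D$. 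The converse (ii)$\Rightarrow$(i) is easier: (D1) follows from $1\in D$ (any element of the non-empty $D$ is below $1$, so (F2) forces $1\in D$), and for (D2) pick $z\in(x\rightarrow y)\cap D$; by definition of $\rightarrow$ we have $x\wedge z\leq y$, whence $x\wedge z\in D$ by (F1) and finally $y\in D$ by (F2).

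For (ii)$\Rightarrow$(iii), I check directly that $\Theta(D)$ is a congruence. Reflexivity and symmetry are immediate (using that $D$ is non-empty). For transitivity, given $(x,y),(y,z)\in\Theta(D)$ witnessed by $a,b\in D$, the element $c:=a\wedge b$ lies in $D$ by (F1) and satisfies $x\wedge c=y\wedge c=z\wedge c$. Compatibility with $\wedge$ follows by the same trick: if $(x_1,y_1)$ and $(x_2,y_2)$ are in $\Theta(D)$ via $a$ and $b$, then $c=a\wedge b\in D$ witnesses $(x_1\wedge x_2,y_1\wedge y_2)\in\Theta(D)$. To show $D=[1]\Theta(D)$, for $x\in D$ take $a:=x\in D$, giving $x\wedge a=x=1\wedge a$, so $(x,1)\in\Theta(D)$; conversely, $x\in[1]\Theta(D)$ provides $a\in D$ with $x\wedge a=1\wedge a=a$, i.e.\ $a\leq x$, so (F2) yields $x\in D$.

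The implication (iii)$\Rightarrow$(ii) is then instantaneous by Lemma~\ref{lem1}(i), since $D=[1]\big(\Theta(D)\big)$ is the $1$-class of a congruence. I expect no serious obstacle in this argument; the only point requiring care is the bookkeeping for (F1) in the step (i)$\Rightarrow$(ii), where one must invoke Theorem~\ref{th2}(xiii) to produce an element of $x\rightarrow(x\wedge y)$ lying above $y$ in order to feed (D2) — this is the nontrivial use of the unsharp adjointness that replaces the familiar one-line argument available in the pseudocomplemented case.
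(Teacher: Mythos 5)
Your proposal is correct; the core arguments for (i)$\Rightarrow$(ii) and (ii)$\Rightarrow$(iii) coincide with the paper's (same use of Theorem~\ref{th2}(ii) for (F2), of Theorem~\ref{th2}(xiii) for (F1), and the same $a\wedge b$ witness for transitivity and compatibility of $\Theta(D)$). Where you differ is in how the cycle is closed: the paper proves (iii)$\Rightarrow$(i) directly, verifying (D1) via the ACC (finding a maximal element above a given $a\in D$ and computing in the quotient) and (D2) by the chain $b=1\wedge1\wedge b\in[a\wedge c\wedge b]\Theta(D)=\dots=D$, whereas you route through (iii)$\Rightarrow$(ii) by citing Lemma~\ref{lem1}(i) and then add the easy implication (ii)$\Rightarrow$(i). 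Your decomposition actually puts Lemma~\ref{lem1} to work (the paper states it but does not invoke it in this proof), and it lets you replace the ACC argument for (D1) by the simpler observation that boundedness forces $\Max S=\{1\}$; the paper's direct (iii)$\Rightarrow$(i) has the mild advantage of not needing the extra implication (ii)$\Rightarrow$(i), but both are equally rigorous. One cosmetic note: your compatibility check for $\Theta(D)$ is the two-variable version, while the paper only verifies $(a\wedge f,b\wedge f)\in\Theta(D)$ for a fixed $f$; both suffice for a semilattice congruence.
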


\begin{proof}
$\text{}$ \\
(i) $\Rightarrow$ (ii):
\begin{enumerate}
\item[(F2)] Assume $a\in D$, $b\in S$ and $a\leq b$. Then $a\rightarrow b=\Max S$ because of (ii) of Theorem~\ref{th2}. According to (D1) we have $(a\rightarrow b)\cap D=\Max S\cap D\neq\emptyset$ and hence $b\in D$ by (D2).
\item[(F1)] Let $a,b\in D$. Then by (xiii) of Theorem~\ref{th2} we have $b\leq_1a\rightarrow(a\wedge b)$. Hence there exists some $c\in a\rightarrow(a\wedge b)$ with $b\leq c$. Now (F2) implies $c\in D$ and therefore $\big(a\rightarrow(a\wedge b)\big)\cap D\neq\emptyset$ from which we conclude $a\wedge b\in D$ by (D2).
\end{enumerate}
(ii) $\Rightarrow$ (iii): \\
Evidently, $\Theta(D)$ is reflexive and symmetric. Let $(a,b),(b,c)\in\Theta(D)$. Then there exist $d,e\in D$ with $a\wedge d=b\wedge d$ and $b\wedge e=c\wedge e$. Because of (F1) we conclude $d\wedge e\in D$. Now
\[
a\wedge(d\wedge e)=(a\wedge d)\wedge e=(b\wedge d)\wedge e=(b\wedge e)\wedge d=(c\wedge e)\wedge d=c\wedge(d\wedge e)
\]
which yields $(a,c)\in\Theta(D)$, i.e.\ $\Theta(D)$ is transitive. Further, if $f\in S$ then
\[
(a\wedge f)\wedge d=(a\wedge d)\wedge f=(b\wedge d)\wedge f=(b\wedge f)\wedge d
\]
showing $(a\wedge f,b\wedge f)\in\Theta(D)$. Hence $\Theta(D)\in\Con\mathbf S$. If $a\in D$ then because of $a\wedge a=a=1\wedge a$ we have $a\in[1]\big(\Theta(D)\big)$ showing $D\subseteq[1]\big(\Theta(D)\big)$. Conversely, assume $a\in[1]\big(\Theta(D)\big)$. Then there exists some $b\in D$ with $a\wedge b=1\wedge b$. This implies $b\leq a$ wherefrom we conclude $a\in D$ by (F2) showing $[1]\big(\Theta(D)\big)\subseteq D$. \\
(iii) $\Rightarrow$ (i):
\begin{enumerate}[(D1)]
\item If $a\in D$ then, since $\mathbf S$ satisfies the ACC, there exists some $b\in\Max S$ with $a\leq b$ and hence
\[
b=1\wedge b\in[a\wedge b]\big(\Theta(D)\big)=[a]\big(\Theta(D)\big)=[1]\big(\Theta(D)\big)=D.
\]
\item If $a\in D$, $b\in S$ and $(a\rightarrow b)\cap D\neq\emptyset$ then there exists some $c\in D$ with $c\in a\rightarrow b$ and hence $a\wedge c\leq b$ whence
\[
b=1\wedge1\wedge b\in[a\wedge c\wedge b]\big(\Theta(D)\big)=[a\wedge c]\big(\Theta(D)\big)=[1\wedge1]\big(\Theta(D)\big)=[1]\big(\Theta(D)\big)=D.
\]
\end{enumerate}
\end{proof}

It is well known that for a filter $F$ of a relatively pseudocomplemented semilattice we have $(a,b)\in\Theta(F)$ if and only if $a\rightarrow b\in F$ and $b\rightarrow a\in F$. However, we can modify this result also for an arbitrary meet-semilattice with $0$ satisfying the ACC provided our unsharp implication is considered.

\begin{proposition}
Let $\mathbf S=(S,\wedge,0)$ be a meet-semilattice with $0$ satisfying the {\rm ACC}, $F$ a filter of $\mathbf S$ and $a,b\in S$. Then the following are equivalent:
\begin{enumerate}[{\rm(i)}]
\item $(a,b)\in\Theta(F)$,
\item $(a\rightarrow b)\cap F\neq\emptyset$ and $(b\rightarrow a)\cap F\neq\emptyset$.
\end{enumerate}
\end{proposition}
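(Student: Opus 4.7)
The plan is a direct two-way verification, where the semilattice identities do all the work and the ACC is only needed in the forward direction to extend a witness to a maximal one.

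For (i) $\Rightarrow$ (ii), I would start with an element $c\in F$ witnessing $(a,b)\in\Theta(F)$, that is, $a\wedge c=b\wedge c$. Reading this equation one way gives $a\wedge c=b\wedge c\leq b$, so $c$ lies in $\{x\in S\mid a\wedge x\leq b\}$. By the ACC there exists some $d\in\Max\{x\in S\mid a\wedge x\leq b\}=a\rightarrow b$ with $c\leq d$, and since $c\in F$ and $F$ is a filter, (F2) gives $d\in F$; hence $(a\rightarrow b)\cap F\neq\emptyset$. The symmetric reading $b\wedge c\leq a$ produces an element of $(b\rightarrow a)\cap F$ in exactly the same way.

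For (ii) $\Rightarrow$ (i), I would pick $d\in(a\rightarrow b)\cap F$ and $e\in(b\rightarrow a)\cap F$. By (F1) we have $d\wedge e\in F$, and by the definition of $\rightarrow$ we have $a\wedge d\leq b$ and $b\wedge e\leq a$. The claim is that $d\wedge e$ witnesses $(a,b)\in\Theta(F)$, i.e.\ $a\wedge(d\wedge e)=b\wedge(d\wedge e)$. Meeting $a\wedge d\leq b$ with $e$ yields $a\wedge d\wedge e\leq b\wedge d\wedge e$, and meeting $b\wedge e\leq a$ with $d$ yields $b\wedge d\wedge e\leq a\wedge d\wedge e$; combining these two inequalities gives the equality.

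There is no real obstacle here: the only subtle point is to remember that in the forward direction $c$ itself need not already belong to $a\rightarrow b$ (since $a\rightarrow b$ consists of \emph{maximal} such elements), which is why the ACC is invoked to replace $c$ by a maximal extension $d\geq c$ still satisfying $a\wedge d\leq b$, and then (F2) transports membership in $F$ upward from $c$ to $d$. Everything else is elementary manipulation of meets.
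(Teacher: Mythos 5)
Your proof is correct and follows essentially the same route as the paper's: extend the witness $c$ to maximal elements of $a\rightarrow b$ and $b\rightarrow a$ via the ACC and (F2) in one direction, and use the meet $d\wedge e$ as the witness together with (F1) in the other. No further comment is needed.
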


\begin{proof}
$\text{}$ \\
(i) $\Rightarrow$ (ii): \\
There exists some $c\in F$ with $a\wedge c=b\wedge c$. Hence $a\wedge c\leq b$ and $b\wedge c\leq a$ and therefore there exists some $d\in a\rightarrow b$ with $c\leq d$ and some $e\in b\rightarrow a$ with $c\leq e$. Because of (F2) we conclude $d,e\in F$ showing (ii). \\
(ii) $\Rightarrow$ (i): \\
Let $c\in(a\rightarrow b)\cap F$ and $d\in(b\rightarrow a)\cap F$. Then $c\wedge d\in F$ by (F1), $a\wedge c\leq b$ and $b\wedge d\leq a$. Hence
\[
a\wedge(c\wedge d)=(a\wedge c)\wedge(c\wedge d)\leq b\wedge(c\wedge d)=(b\wedge d)\wedge(c\wedge d)\leq a\wedge(c\wedge d),
\]
i.e.\ $a\wedge(c\wedge d)=b\wedge(c\wedge d)$ showing (i).
\end{proof}

{\bf Conclusion}

Although the implication within the logic based on the structure $(S,\wedge,0,\rightarrow)$ is unsharp, i.e.\ $x\rightarrow y$ may be a subset $I$ of $S$ which need not be a singleton, it has its logical meaning. Namely, we ask that $x\rightarrow y$ is the maximal element $c$ of $S$ satisfying $x\wedge c\leq y$ (where $\wedge$ denotes conjunction). And for each $c\in I$ this is satisfied. Moreover, the elements of $I$ are mutually incomparable. Thus we have no need to prefer one of them with respect to others. However, the expression
\[
x\wedge(x\rightarrow y)\leq y
\]
is nothing else than the derivation rule Modus Ponens (both in classical as well as in non-classical logic) since it properly says that the truth value of $y$ cannot be less than the truth value of the conjunction $x\wedge(x\rightarrow y)$ of $x$ and the implication $x\rightarrow y$. Hence, despite of the fact of unsharpness, such a logic is sound although it is derived from an arbitrary meet-semilattice with $0$ satisfying the ACC.

Authors' addresses:

Ivan Chajda \\
Palack\'y University Olomouc \\
Faculty of Science \\
Department of Algebra and Geometry \\
17.\ listopadu 12 \\
771 46 Olomouc \\
Czech Republic \\
ivan.chajda@upol.cz

Helmut L\"anger \\
TU Wien \\
Faculty of Mathematics and Geoinformation \\
Institute of Discrete Mathematics and Geometry \\
Wiedner Hauptstra\ss e 8-10 \\
1040 Vienna \\
Austria, and \\
Palack\'y University Olomouc \\
Faculty of Science \\
Department of Algebra and Geometry \\
17.\ listopadu 12 \\
771 46 Olomouc \\
Czech Republic \\
helmut.laenger@tuwien.ac.at

\begin{thebibliography}{99}
\bibitem{B08}
L.~E.~J.~Brouwer, De onbetrouwbaarheid der logische principes. Tijdschrift Wijsbegeerte {\bf2} (1908), 152--158.
\bibitem{B13}
L.~E.~J.~Brouwer, Intuitionism and formalism. Bull.\ Amer.\ Math.\ Soc.\ {\bf20} (1913), 81--96.
\bibitem{C03}
I.~Chajda, An extension of relative pseudocomplementation to non-distributive lattices. Acta Sci.\ Math.\ (Szeged) {\bf69} (2003), 491--496.
\bibitem{C12}
I.~Chajda, Pseudocomplemented and Stone posets. Acta Univ.\ Palack.\ Olomuc.\ Fac.\ Rerum Natur.\ Math.\ {\bf51} (2012), 29--34.
\bibitem{CL21}
I.~Chajda and H.~L\"anger, Algebras describing pseudocomplemented, relatively pseudocomplemented and sectionally pseudocomplemented posets. Symmetry {\bf13} (2021), 753 (17 pp.)
\bibitem{CL22a}
I.~Chajda and H.~L\"anger, Implication in finite posets with pseudocomplemented sections. Soft Computing {\bf26} (2022), 5945--5953.
\bibitem{CL22b}
I.~Chajda and H.~L\"anger, The logic of orthomodular posets of finite height. Log.\ J.\ IGPL {\bf30} (2022), 143--154.
\bibitem{CLa}
I.~Chajda and H.~L\"anger, Operator residuation in orthomodular posets of finite height. Fuzzy Sets Systems (submitted).
\bibitem{CLP}
I.~Chajda, H.~L\"anger and J.~Paseka, Sectionally pseudocomplemented posets. Order {\bf38} (2021), 527--546.
\bibitem{FLP}
D.~Fazio, A.~Ledda and F.~Paoli, On Finch's conditions for the completion of orthomodular posets. Found.\ Sci.\ (2020), https://doi.org/10.1007/s10699-020-09702-z.
\bibitem{Fi}
P.~D.~Finch, On orthomodular posets. J.\ Austral.\ Math.\ Soc.\ {\bf11} (1970), 57--62.
\bibitem{Fr}
O.~Frink, Pseudo-complements in semi-lattices. Duke Math.\ J.\ {\bf29} (1962), 505--514.
\bibitem{GG}
R.~Giuntini and H.~Greuling, Toward a formal language for unsharp properties. Found.\ Phys.\ {\bf19} (1989), 931--945.
\bibitem H
A.~Heyting, Die formalen Regeln der intuitionistischen Logik. Sitzungsber.\ Akad.\ Berlin 1930, 42--56.
\bibitem K
P.~K\"ohler, Brouwerian semilattices: the lattice of total subalgebras. Banach Center Publ.\ {\bf9} (1982), 47--56.
\bibitem{M55}
A.~Monteiro, Axiomes ind\'ependants pour les alg\`ebres de Brouwer. Rev.\ Un.\ Mat.\ Argentina {\bf17} (1955), 149--160.
\bibitem{M70}
L.~Monteiro, Les alg\`ebres de Heyting et de Lukasiewicz trivalentes. Notre Dame J.\ Formal Logic {\bf11} (1970), 453--466.
\bibitem{PP}
P.~Pt\'ak and S.~Pulmannov\'a, Orthomodular Structures as Quantum Logics. Kluwer, Dordrecht 1991. ISBN 0-7923-1207-4.
\end{thebibliography}
\end{document}